\newcommand{\mfr}[1]{\ensuremath \mathfrak{#1}}
\newcommand{\mbb}[1]{\ensuremath \mathbb{#1}}
\newcommand{\mbf}[1]{\ensuremath \mathbf{#1}}
\newcommand{\mcl}[1]{\ensuremath \mathcal{#1}}
\newcommand{\mrm}[1]{\ensuremath \mathrm{#1}}
\DeclareMathOperator{\id}{id}
\DeclareMathOperator{\tr}{tr}
\DeclareMathOperator{\rank}{rank}
\DeclareMathOperator{\Rc}{Rc}
\DeclareMathOperator{\Ric}{Ric}
\DeclareMathOperator{\Rm}{Rm}
\DeclareMathOperator{\scal}{scal}
\newcommand{\Ro}{\overset{{}_\circ}{R}}
\newcommand{\opA}{\mbf{L}}
\DeclareMathOperator{\SL}{SL}
\DeclareMathOperator{\SO}{SO}
\newcommand{\smfrac}[2]{{\textstyle{\frac{#1}{#2}}}}
\newcommand{\half}{{\smfrac{1}{2}}}
\numberwithin{equation}{section}
\theoremstyle{plain} %% This is the default, anyway
\newtheorem{thm}[equation]{Theorem}
\newtheorem{prop}[equation]{Proposition}
\newtheorem{conj}[equation]{Conjecture}
\theoremstyle{definition}
\theoremstyle{remark}
\newtheorem{rem}[equation]{Remark}
\begin{document}

\author{Michael Jablonski}
\address{Department of Mathematics, University of Oklahoma}
\email{mjablonski@math.ou.edu}

\author{Peter Petersen}
\address{Department of Mathematics, University of California, Los Angeles}
\email{petersen@math.ucla.edu}
%\urladdr{http://www.math.ucla.edu/~petersen/}

\author[M.~B.~Williams]{Michael Bradford Williams}
\address{Department of Mathematics, University of California, Los Angeles}
\email{mwilliams@math.ucla.edu}
%\urladdr{http://www.math.ucla.edu/~mwilliams/}

%\thanks{The first author was supported by NSF grant DMS-1006677.}

\title{On the linear stability of expanding {R}icci solitons}
\date{\today}

\begin{abstract}
In previous work, the authors studied the linear stability of algebraic Ricci solitons on simply connected solvable Lie groups (solvsolitons), which are stationary solutions of a certain normalization of Ricci flow.  Many examples were shown to be linearly stable, leading to the conjecture that all solvsolitons are linearly stable.  This paper makes progress towards that conjecture, showing that expanding Ricci solitons with bounded curvature (including solvsolitons) are linearly stable after extension by a Gaussian soliton.  As in the previous work, the dynamical stability follows from a generalization of the techniques of Guenther, Isenberg, and Knopf.
\end{abstract}

%%% AMS subject classification
\subjclass[2010]{53C25, 53C30, 53C44, 22E25}
%53C25    	Special Riemannian manifolds (Einstein, Sasakian, etc.)
%53C30     	Homogeneous manifolds
%53C44    	Geometric evolution equations (mean curvature flow, Ricci flow, etc.)
%22E25    	Nilpotent and solvable Lie groups

%%%-------------------------------------------------------------------
%%% body of the paper
%%%-------------------------------------------------------------------

\maketitle

\section{Introduction}

Given a manifold $M$, $\lambda \in \mbb{R}$, and a vector field $X$, the \textit{curvature-normalized Ricci flow} for metrics on $M$ is
\begin{equation}\label{eq:cnrf}
\partial_t g = -2 \Rc + 2\lambda g + \mcl{L}_X g.
\end{equation}
This differs from ordinary Ricci flow, $\partial_t g = -2 \Rc$, by the action of scaling and diffeomorphisms \cite{GuentherIsenbergKnopf2006}.  The purpose of the normalization is that a Ricci soliton $(M,g_0,\lambda,X)$ satisfying
\begin{equation}\label{eq:ricci-soliton}
\Rc(g_0) = \lambda g_0 + \half \mcl{L}_X g
\end{equation}
is a stationary solution of \eqref{eq:cnrf}.  

As Ricci solitons play a central role in understanding singularity models of Ricci flow, it is therefore interesting to know the dynamical properties of these special solutions, namely, to determine dynamical stability of Ricci solitons relative to \eqref{eq:cnrf}.  For example, a Ricci flow solution $g(t)$ is called \textit{Type-III} if
\[ g(t) \text{ exists } \forall t \geq 0 \qquad \text{ and } \qquad \sup_{M \times [0,\infty)} t \|\Rm(t)\| < \infty. \]
This behavior encompasses a large and important class of Ricci flow solutions.  Indeed, we have the following, due originally to Hamilton. 

\begin{conj}\label{conj:ham}
Type-III solutions asymptotically approach soliton metrics that are (locally) homogeneous.
\end{conj}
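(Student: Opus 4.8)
The plan is to study the long-time behavior of a Type-III solution $g(t)$ through a \emph{parabolic blow-down}. Fix a basepoint $p \in M$ and a sequence $t_i \to \infty$, and set $g_i(t) = t_i^{-1} g(t_i t)$. Each $g_i$ is again a solution of Ricci flow, and the scale-invariant Type-III hypothesis $\sup_{M \times [0,\infty)} t\norm{\Rm(t)} < \infty$ transforms into a uniform bound $\norm{\Rm(g_i(t))} \leq C/t$ on $(0,\infty)$, independent of $i$. Thus every rescaled flow has uniformly bounded curvature on any fixed interval $[a,b] \subset (0,\infty)$, which is precisely the estimate one needs to hope for a limit. Since expanding solitons are the natural models for immortal flows, just as shrinkers model singularities, the expectation is that the blow-down limit should be an expanding soliton.

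The next step is to extract such a limit. In the non-collapsed case --- when Perelman's no-local-collapsing, or an a priori injectivity-radius lower bound, applies --- Hamilton's compactness theorem yields a smooth pointed limit $(M_\infty, g_\infty(t), p_\infty)$ that is itself a bounded-curvature Ricci flow. The genuinely new feature of the Type-III setting is that the $g_i$ may \emph{collapse}, so in general one must instead use equivariant (Cheeger--Fukaya--Gromov) convergence and pass to limits on the local universal covers, obtaining a flow on an étale groupoid rather than on a manifold. This is where bounded curvature and the passage to a Gaussian-extended background are needed to control the local geometry.

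To identify the limit as a soliton, I would appeal to a monotonicity formula adapted to expanders, namely an expander entropy functional $\mu_+$ (or a forward reduced-volume of Perelman type). Because the Type-III bound is invariant under parabolic rescaling, this monotone quantity is asymptotically constant along the sequence $g_i$, so any limiting flow must saturate the associated differential inequality. Saturation forces the soliton equation $\Rc(g_\infty) = \lambda g_\infty + \half \mcl{L}_X g_\infty$ with $\lambda > 0$, so that $g_\infty$ is an expanding soliton; this parallels the standard derivation of shrinking blow-up limits from the $\mcl{W}$-entropy.

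The hard part will be the final assertion, local homogeneity of $g_\infty$. Bounded curvature and the soliton equation alone do not force homogeneity, so one must exploit the collapsing structure directly: by bounded-curvature collapse theory, the collapsed directions should carry a transitive local nilpotent (or solvable) symmetry, and the task is to show that the limiting expanding soliton is exactly of solvsoliton type. Establishing this, and ruling out non-homogeneous expanding solitons as blow-down limits in general, is exactly the content that remains open --- which is why the statement is recorded here as a conjecture rather than a theorem, and why the present paper instead attacks the linear-stability half of the picture.
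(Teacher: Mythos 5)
There is nothing in the paper to compare your argument against: the statement you were asked to prove is Conjecture \ref{conj:ham}, Hamilton's conjecture, and the paper does not prove it. It is stated purely as motivation, with the remark that even formulating ``asymptotically approach'' precisely is part of the difficulty, and with Lott's work cited as evidence in dimension 3. So the honest assessment is that your text is not a proof but a research outline --- and, to your credit, your final paragraph concedes exactly this. Given that, the review must focus on whether the outline could be closed, and it cannot be with the tools you invoke.

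The concrete gaps are these. First, in the extraction step, Perelman's no-local-collapsing theorem controls finite-time behavior and gives nothing as $t \to \infty$ for immortal solutions; Type-III solutions (e.g.\ Ricci flow on nilmanifolds) genuinely collapse, so the ``non-collapsed case'' you lead with is essentially vacuous, and the entire argument must run through the equivariant/groupoid framework of Lott --- which you name but do not execute. Your parenthetical claim that ``the passage to a Gaussian-extended background'' helps control the local geometry of blow-down limits is a non sequitur: Gaussian extensions enter this paper's linear-stability argument (Theorem \ref{thm:lin-stab}) as a device for absorbing curvature terms via a large $-\lambda k$ contribution to $\mrm{div}(X+\nabla f)$, and play no role in compactness of rescaled flows. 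Second, in the identification step, the monotonicity you appeal to does not exist in the required generality: the expander entropy of Feldman--Ilmanen--Ni needs compactness, Perelman's forward reduced volume has no rigidity statement forcing the soliton equation on collapsed (groupoid) limits, and ``saturation forces $\Rc(g_\infty) = \lambda g_\infty + \half \mcl{L}_X g_\infty$'' is precisely the missing mathematics, not a step one may cite. (Note also a sign slip: in this paper's convention \eqref{eq:ricci-soliton}, expanding means $\lambda < 0$, not $\lambda > 0$.) Third, local homogeneity of the limit --- the actual content of the conjecture --- is left open by you, as it is in the literature. In short, the statement is open, the paper proves nothing about it, and your sketch, while a fair summary of the known line of attack, does not constitute progress on any of its three missing ingredients.
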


Part of the difficulty here is to precisely describe what ``asymptotically approach'' means, but there is recent evidence to support the conjecture, notably the great progress made by Lott in dimension 3 \cites{Lott2007,Lott2010}.  Based on that work, it is reasonable to take the homogeneous solitons to be expanding ($\lambda<0$).  Then, a step towards understanding the conjecture in higher dimensions is, first, to describe the possible expanding homogeneous Ricci solitons, and second, to show that they are dynamically stable under (curvature-normalized) Ricci flow.  However, even the question of understanding homogeneous Ricci solitons is nontrivial. 

\begin{conj}
Every expanding homogeneous Ricci soliton is isometric to a simply-connected solvmanifold. 
\end{conj}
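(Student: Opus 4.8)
The plan is to reduce the conjecture to the Alekseevskii conjecture for homogeneous Einstein manifolds of negative scalar curvature, for which substantial machinery already exists. First I would fix an expanding homogeneous Ricci soliton $(M,g_0,\lambda,X)$ with $\lambda<0$ and realize it as a metric homogeneous space $M=G/H$, with $G$ a connected group of isometries acting transitively and $H$ the compact isotropy subgroup. The next move is to upgrade the soliton equation \eqref{eq:ricci-soliton} to an \emph{algebraic} soliton: appealing to the structural theory of homogeneous Ricci solitons (that semi-algebraic solitons are in fact algebraic), one arranges a reductive decomposition $\mfr{g}=\mfr{h}\oplus\mfr{m}$ in which the Ricci operator takes the rigid form $\Ric=\lambda\,\id+D$, where $D$ is a derivation of $\mfr{g}$ respecting the splitting. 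This normal form is what opens the door to Einstein geometry.

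Next I would exploit the soliton-to-Einstein correspondence. From the algebraic data $(\mfr{g},D,\lambda)$ one builds a solvable extension $\tilde{\mfr{g}}=\mfr{g}\rtimes_{D}\mbb{R}$, with the new $\mbb{R}$-direction acting by $D$, equipped with the canonically extended inner product. The algebraic soliton condition on $\mfr{g}$ translates precisely into the Einstein condition $\Ric(\tilde g)=\lambda\,\tilde g$ on the one-dimension-higher homogeneous space $\tilde M$. Thus the original expanding soliton is encoded by a noncompact homogeneous Einstein manifold of the same negative Einstein constant, and classification questions transport to the Einstein setting, where Heber's structure theory of noncompact homogeneous Einstein spaces and the moment-map and bracket-flow methods of Lauret become available.

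The decisive step is then to invoke the Alekseevskii conjecture: a connected homogeneous Einstein manifold of negative scalar curvature is diffeomorphic to $\mbb{R}^n$ and admits a transitive \emph{solvable} isometry group, hence is isometric to a simply-connected solvmanifold (indeed a standard Einstein solvmanifold). Applying this to $\tilde M$ yields a transitive solvable action on the extension; one then descends along the $\mbb{R}$-factor, identifying the codimension-one soliton level as a simply-connected solvmanifold and checking that the induced transitive solvable action is by isometries of $(M,g_0)$. Care is needed to verify that $D$ is conjugate to a symmetric, positive-semidefinite derivation, so that the extension is a genuine solvmanifold rather than merely a quotient, and to rule out nontrivial compact isotropy surviving the descent.

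The main obstacle is precisely the Alekseevskii conjecture itself, which governs the passage from an arbitrary transitive isometry group to a solvable one; this is the deepest and historically most resistant ingredient, resolved only recently and by techniques — collapsing, real geometric invariant theory, and analysis of the nilradical — that lie far beyond the soliton equation \eqref{eq:ricci-soliton}. A secondary difficulty is guaranteeing that the algebraic presentation and the Einstein extension can always be carried out with the correct signature and derivation properties, \emph{without} tacitly assuming the solvable structure one is trying to establish. Pinning down this two-way correspondence, so that one obtains an equivalence with the Einstein Alekseevskii conjecture rather than a one-sided implication, is what makes the reduction a legitimate proof strategy.
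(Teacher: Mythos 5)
The statement you were asked to prove is not a theorem of the paper at all: it is stated as a \emph{conjecture} (the generalized Alekseevskii conjecture), and the authors explicitly write that they ``will not address this conjecture further.'' There is no proof in the paper to compare against, and your proposal does not supply one either. The genuine gap is in what you call the ``decisive step'': you \emph{invoke} the Alekseevskii conjecture for noncompact homogeneous Einstein spaces, which was itself an open problem at the time of this paper. A proof cannot rest on an unproven statement, and in this case the circularity is especially acute, since the conjecture you are asked to prove is precisely a generalization of the one you invoke. What your argument establishes, at best, is the conditional implication ``Alekseevskii conjecture $\Rightarrow$ generalized Alekseevskii conjecture.''

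That implication, for what it is worth, is genuine mathematics --- but it is not new: it is essentially the content of the Lafuente--Lauret paper \cite{LafuenteLauret2013} cited immediately after the conjecture, combined with Jablonski's theorem that homogeneous Ricci solitons are algebraic \cite{Jablonski2014}. Your outline (realize the soliton as an algebraic soliton via a reductive decomposition with $\Ric = \lambda \id + D$, build the one-dimensional Einstein extension $\tilde{\mfr{g}} = \mfr{g} \rtimes_D \mbb{R}$, transport the problem to the Einstein setting, then descend) follows their reduction closely, and your own closing paragraph concedes that the main obstacle is the Alekseevskii conjecture itself. So the honest assessment is: you have sketched a known reduction of one open conjecture to another, not a proof of the statement. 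Had you framed the result as that conditional equivalence, the write-up would be defensible; as a proof of the conjecture, it fails at the step where the heavy lifting is simply assumed.
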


This is the \textit{generalized Alexeevskii conjecture} \cites{LafuenteLauret2013,Jablonski2014}.  We will not address this conjecture further, but all evidence so far supports it.  Up to isometry, all homogeneous Ricci solitons on simply connected solvable Lie groups satisfy the following equation on the Lie algebra level:
\begin{equation}\label{eq:alg-soliton}
\Ric = \lambda \id + D,
\end{equation}
where $\Ric$ is the Ricci endomorphism, $\lambda <0$, and $D$ is a derivation of the Lie algebra.  A metric satisfying this equation is called an \textit{algebraic soliton}; on nilpotent and solvable Lie groups, such metrics are called \textit{nilsolitons} and \textit{solvsolitons}, respectively.  

Regarding dynamical stability, there are many approaches, but we follow the program of Guenther, Isenberg, and Knopf \cites{GuentherIsenbergKnopf2002,GuentherIsenbergKnopf2006}.  After normalizing Ricci flow as in \eqref{eq:cnrf}, one applies the DeTurck trick and linearizes the flow at a fixed point:
%\begin{equation}\label{eq:cnrf-lin}
\[ \partial_t h = \opA h := \Delta_L h + 2\lambda h + \mcl{L}_X h. \]
%\end{equation}
where $\Delta_L$ is the Lichnerowicz Laplacian acting on symmetric 2-tensors.  Next one proves the fixed point is strictly linearly stable, that is, that there exists $\epsilon > 0$ such 
\begin{equation}\label{eq:stab-cond}
(\opA h,h) \leq - \epsilon \|h\|^2 
\end{equation}
for all symmetric 2-tensors $h$ taken from some appropriate tensor space.  With this established, dynamical stability is obtained by appealing to a theorem is Simonett \cite{Simonett1995}.  This step is not immediate, especially in the non-compact setting of simply connected solvmanifolds, but work of the third author and Wu addresses this issue \cite{WilliamsWu2013-dynamical}, see Theorem \ref{thm:dyn-stab}.

The goal of this paper is to address the question of linear stability of expanding Ricci solitons, particularly solvsolitons.  In previous work of the authors, many solvsolitons were shown to be linearly stable with respect to \eqref{eq:cnrf}.  The following is \cite{JablonskiEtAl2013-linear}*{Theorem 1.10}.

\begin{thm}\label{thm:stab-all}
The following algebraic solitons are strictly linearly stable with respect to the curvature-normalized Ricci flow:
\begin{enumerate}
\item\label{item1} every nilsoliton of dimension six or less, and every member of a certain one-parameter family of seven-dimensional nilsolitons;
\item every abelian or two-step nilsoliton;
\item every four-dimensional solvsoliton whose nilradical is the three-dimensional Heisenberg algebra;
\item an open set of solvsolitons whose nilradicals are codimension-one and abelian;
\item every solvable Einstein metric whose nilradical is codimension-one and 
\begin{enumerate}
\item found in \ref{item1} and has dimension greater than one, or
\item a generalized Heisenberg algebra, or
\item a free two-step nilpotent algebra.
\end{enumerate}
\item for each $m \geq 2$, an $(8m^2-6m-8)$-dimensional family of negatively-curved Einstein metrics containing the quaternionic hyperbolic space $\mbb{H}H^{m+1}$;
\item an $84$-dimensional family of negatively-curved Einstein metrics containing the Cayley hyperbolic plane $\mbb{C}aH^2$.
\end{enumerate}
Furthermore, any non-nilpotent solvsoliton $\mfr{s} = \mfr{n} \rtimes \mfr{a}$ satisfying condition \eqref{eq:q-cond} and $0 < \dim(\mfr{a}) < \rank(\mfr{n})$ is contained in an open set of stable solvsolitons.
\end{thm}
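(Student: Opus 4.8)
The plan is to verify the strict stability inequality \eqref{eq:stab-cond} family by family, after first reducing the linearized operator $\opA$ to a single endomorphism of the finite-dimensional space $\mrm{Sym}^2(\mfr{g}^*)$ of left-invariant symmetric $2$-tensors. On a homogeneous space the rough Laplacian $\Delta$, the curvature action $h \mapsto \Ro(h)$, the zeroth-order Ricci terms of $\Delta_L$, and the soliton drift $\mcl{L}_X$ all act algebraically on invariant tensors, so that $(\opA h, h)$ becomes a quadratic form in the structure constants, the Ricci endomorphism, and the derivation $D$ of \eqref{eq:alg-soliton}. Using $\Ric = \lambda \id + D$ to eliminate the Ricci terms, I would organize this quadratic form into three pieces: the rough-Laplacian part, which is negative semidefinite; the pure curvature contribution $2(\Ro(h),h)$, which is the only possibly destabilizing term; and a zeroth-order remainder in which the normalization constant $2\lambda < 0$ combines with the drift and derivation terms. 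Strict stability then reduces to a uniform estimate showing that $2(\Ro(h),h)$ cannot overcome this negative remainder.

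With this reduction in hand, the enumerated families are dispatched by exploiting whatever structure makes the curvature term controllable. For the low-dimensional nilsolitons of (1) and the abelian and two-step case (2), I would use Lauret's classification to reduce to finitely many models or a uniform normal form; in the two-step case the curvature is governed entirely by the bracket map, so $(\Ro(h),h)$ admits an explicit bound and the splitting closes directly, while the seven-dimensional nilsolitons force the restriction to the stated one-parameter subfamily precisely because the curvature estimate becomes tight there. For (3) and (4) the nilradical is Heisenberg or codimension-one abelian, so the soliton is a low-rank solvable extension whose derivation $D$ has a dominant eigenvalue, making the favorable derivation contribution large relative to the curvature. For the Einstein cases (5)--(7) one has $D = 0$, so $\opA = \Delta_L + 2\lambda$ and strict stability is exactly the absence of Lichnerowicz spectrum above $-2\lambda$ on the relevant tensors: for the rank-one models $\mbb{H}H^{m+1}$ and $\mbb{C}aH^2$ this reduces to the linear stability of negatively curved symmetric spaces, which can be read off from their curvature operators, and for the generalized Heisenberg and free two-step nilradicals it follows from the explicit curvature of these Einstein solvmanifolds.

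The family statements in (4), (6), (7), and the final non-nilpotent assertion are then obtained by upgrading a single stable soliton to an open neighborhood. Since the left-hand side of \eqref{eq:stab-cond} is polynomial in the structure constants and the metric, and since negativity of the top eigenvalue of $\opA$ is an open condition, any solvsoliton sufficiently close to a strictly stable one is again strictly stable. Combined with the known dimension of the variety of solvsolitons near each model --- and, for the final clause, with condition \eqref{eq:q-cond} and the inequality $0 < \dim(\mfr{a}) < \rank(\mfr{n})$ guaranteeing that nearby deformations remain solvsolitons of the same type --- this yields open families of the asserted dimensions.

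The main obstacle is the uniform curvature estimate underlying the first step: outside the Einstein setting the operator $\Ro$ is indefinite, so one cannot simply quote a spectral gap, and the negative remainder must be extracted by balancing the derivation term against $2(\Ro(h),h)$ over all of $\mrm{Sym}^2(\mfr{g}^*)$. This balance is delicate exactly in the borderline families --- the seven-dimensional nilsolitons and the open sets in (4) and the final clause --- where stability holds only on a proper subset of the classification, and obtaining a clean, deformation-stable form of the estimate there is the crux of the argument.
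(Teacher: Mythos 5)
This theorem is not proved in the present paper at all: it is quoted verbatim from \cite{JablonskiEtAl2013-linear}*{Theorem 1.10}, and the only ``proof'' here is that citation. What the paper does recall (Proposition \ref{prop:stab-cond} and Section \ref{sec:examples}) is the operative mechanism of that proof, and comparing your outline against it reveals a genuine gap in your very first step. You propose to reduce $\opA$ to an endomorphism of the finite-dimensional space of left-invariant symmetric $2$-tensors and then study a quadratic form there. But the stability condition \eqref{eq:stab-cond} must hold for \emph{all} $h$ in an appropriate infinite-dimensional tensor space, and on a noncompact homogeneous space the spectrum of $\opA$ on $L^2$-type spaces is not captured by its action on invariant tensors: generic perturbations are not invariant, and $\nabla^*\nabla$ and $\mcl{L}_X$ act on them as honest differential operators, not algebraically. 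The actual argument avoids any such reduction. One integrates by parts to discard $-\|\nabla h\|^2 \leq 0$, converts the drift term via $(\mcl{L}_X h, h) = -\tfrac{1}{2}\int \mrm{div}(X)\,|h|^2\,d\mu$ (this is the content of Proposition \ref{prop:stab-cond}), and then uses homogeneity to bound the remaining zeroth-order terms \emph{pointwise} by the algebraic quantity $q$ of \eqref{eq:op-q} evaluated on the Lie algebra. Since $\mrm{div}(X) = \tr D$ is constant for an algebraic soliton, strict stability for arbitrary $h$ follows from the purely algebraic, finite-dimensional inequality \eqref{eq:q-cond}, $q(h) < \tfrac{1}{2}\tr D\,|h|^2$. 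So the finite-dimensional check enters as a pointwise estimate valid against all perturbations, not as a restriction of the operator to invariant tensors; without this step your approach does not establish \eqref{eq:stab-cond}.

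Several of your case-by-case claims also diverge from how the cited proof runs. Two-step nilsolitons are not classified (the class is vast), so there is no reduction to ``finitely many models or a uniform normal form''; instead \eqref{eq:q-cond} is verified by a uniform algebraic estimate exploiting the two-step bracket structure. The low-dimensional cases are settled by verifying \eqref{eq:q-cond} computationally over the existing classifications (exactly as Section \ref{sec:examples} does for the Kadioglu--Payne lists in dimensions $7$ and $8$), and the seven-dimensional one-parameter family in item (1) is simply a family that was checked, not the borderline locus where a curvature estimate ``becomes tight.'' For the Einstein extensions the working criterion is a bound on $\Ro$ relative to $-\lambda$ (visible in the tables), not bare negativity of the Lichnerowicz spectrum, since after the Einstein cancellation one still must beat $2(\Ro h, h)$ using the gradient term. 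Finally, the openness assertions, including the last clause under $0 < \dim(\mfr{a}) < \rank(\mfr{n})$, follow from the strictness and continuity of the algebraic condition \eqref{eq:q-cond} in the soliton data, which is sound in spirit and close to your continuity argument --- but as you state it, it again leans on the invalid finite-dimensional spectral reduction rather than on \eqref{eq:q-cond}.
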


The multitude of stable solitons---and the lack of any unstable ones---leads to the following.

\begin{conj}\label{conj:stab}
Every algebraic soliton is strictly linearly stable. %\marginpar{didn't we show that $\mbb{H}^2$ is strictly linearly stable?}
\end{conj}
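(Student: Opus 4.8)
Since the statement is a conjecture, I would aim not at a complete argument but at the program the abstract advertises: reduce strict linear stability to the spectral inequality \eqref{eq:stab-cond} for the linearized operator $\opA h = \Delta_L h + 2\lambda h + \mcl{L}_X h$, establish it on an appropriate weighted tensor space, and then invoke dynamical stability via Theorem \ref{thm:dyn-stab}. First I would fix the DeTurck gauge to kill the diffeomorphism degeneracy and make $\opA$ elliptic, so that the problem becomes showing $(\opA h, h) \le -\epsilon \norm{h}^2$ for symmetric $2$-tensors $h$ in a suitable space. Because the metric is homogeneous and the soliton data are encoded algebraically by \eqref{eq:alg-soliton}, with $\Ric = \lambda\,\id + D$, $\lambda < 0$, and $D$ a derivation, I would first treat the left-invariant part of the spectrum, where $\Delta_L$, $\mcl{L}_X$, and the relevant curvature terms become explicit endomorphisms of the finite-dimensional space $\mrm{Sym}^2(\mfr{s}^*)$ computable from the Lie bracket and $D$.

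The decisive structural point is the sign split in $\opA$. Writing $\Delta_L = -\nabla^*\nabla + \mcl{R}$, where the rough Laplacian satisfies $-\nabla^*\nabla \le 0$ and $\mcl{R}$ collects the Weitzenb\"ock curvature terms, the combination $-\nabla^*\nabla + 2\lambda$ is strictly negative since $\lambda < 0$; thus the only obstruction to \eqref{eq:stab-cond} is the positive part of the curvature term, whose leading piece is the action $h \mapsto \Ro\, h$ of the curvature operator. I would decompose $h$ into its trace, transverse-traceless, and Lie-derivative components and estimate $(\mcl{R} h, h)$ on each summand, using the algebraic soliton identity to rewrite curvature in terms of $D$ and the bracket. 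The known cases of Theorem \ref{thm:stab-all} succeed precisely because special structure (abelian, two-step, or codimension-one nilradical) makes this estimate explicit; the conjecture demands a bound of the form $\mcl{R} \le (2\abs{\lambda} - \epsilon)\,\id$ that is uniform over all algebraic solitons.

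The neutral directions, where $(\mcl{R} h, h)$ approaches $2\abs{\lambda}\norm{h}^2$ and strictness is lost, are exactly where I would invoke the extension by a Gaussian soliton. Passing to the product of $(\mfr{s}, g)$ with a flat Gaussian soliton on $\mbb{R}^k$ --- which is again expanding with the same $\lambda$, the extra soliton field being the radial field $x \mapsto -\lambda\, x$ --- introduces a drift (weighted) Laplacian on the Euclidean factor whose spectrum has a positive gap. The plan is to show that this gap feeds into $(\opA h, h)$ on the extended manifold and strictly lowers the top of the spectrum below zero even for the problematic tensors, giving \eqref{eq:stab-cond} on the extended space; Theorem \ref{thm:dyn-stab} then upgrades this to dynamical stability in the required noncompact setting.

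The hard part is the uniform curvature estimate. Bounding $\mcl{R}$, and in particular the positive eigenvalues of $\Ro$ acting on $\mrm{Sym}^2(\mfr{s}^*)$, for an arbitrary solvsoliton without the algebraic simplifications available in the special cases is the genuine difficulty. Even granting the Gaussian extension, one must quantify how much negativity the drift term contributes and verify that it dominates the worst-case curvature eigenvalue uniformly in the dimension and in the derivation $D$; making that comparison effective is where I expect the real obstacle to lie.
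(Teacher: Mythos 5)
First, a point of order: the statement you were asked to address is Conjecture \ref{conj:stab}, which the paper itself does not prove --- it remains open, and the paper offers only partial progress, namely Theorem \ref{thm:lin-stab} (stability after extension by a Gaussian factor, for any expanding soliton with bounded curvature) together with computer verification of the algebraic criterion \eqref{eq:q-cond} for several hundred nilsolitons and their solvable extensions in dimensions $7$ and $8$ (Section \ref{sec:examples}). You correctly treat the statement as a program rather than a theorem, and your Gaussian-extension step is indeed the paper's main theorem, so the comparison to make is between your program and the paper's actual progress.

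The substantive mismatch is in where you locate the difficulty. Your proposed mechanism for the extension step --- decompose $h$ into trace, transverse-traceless, and Lie-derivative parts, isolate the ``neutral directions'' where the curvature term saturates, and then check that the gap of the Euclidean drift Laplacian dominates the worst-case curvature eigenvalue ``uniformly in the dimension and in the derivation $D$'' --- is far more elaborate than what is needed, and your worry that this comparison is the ``real obstacle'' is misplaced. For a fixed soliton the comparison is trivial, and that triviality \emph{is} the paper's proof: curvature splits on the product and the Euclidean factor is flat, so $|q_{(x,y)}(h)|\leq C_1|h|^2$ with $C_1$ depending only on $(M,g_0)$ and independent of $k$; tracing the soliton equation gives $|\mrm{div}(X)|\leq |\scal(g)|+n|\lambda|\leq 2C_2$; and $\mrm{div}(\nabla f)=-\lambda k$ grows linearly in $k$. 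Feeding these into Proposition \ref{prop:stab-cond} yields $Q(h)-\frac12\int \mrm{div}(X+\nabla f)|h|^2\,d\mu \leq [C_1+C_2+\frac12\lambda k]\|h\|^2$, and one simply takes $k$ large. No tensor decomposition, no algebraic soliton structure, not even homogeneity enters --- only bounded curvature. The genuine obstacle to Conjecture \ref{conj:stab} is the case $k=0$: a uniform verification of \eqref{eq:q-cond}, $q(h)<\frac12 \tr D\,|h|^2$, across all algebraic solitons. Note also that the correct threshold there is $\frac12\tr D=\frac12\mrm{div}(X)$, not your $2|\lambda|$: the Lie-derivative term contributes to the quadratic form alongside $2\lambda h$, and for solvsolitons $\tr D$ is typically much larger than $|\lambda|$ (see the tables in Section \ref{sec:examples}). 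The paper makes no uniform claim of the kind your program requires; it verifies \eqref{eq:q-cond} example-by-example with a computer algebra system, which is a fair measure of how far the conjecture remains from the structural argument you sketch.
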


The main result of this paper makes progress toward positive resolution of this conjecture.  It essentially says that expanding Ricci solitons (not assumed to be algebraic) are stable, possibly after the addition of a Gaussian factor. 

\begin{thm}\label{thm:lin-stab}
Let $(M^n,g_0)$ be an expanding Ricci soliton metric with bounded curvature, satisfying
\[ \Rc(g_0) = \lambda g_0 + \half \mcl{L}_X g_0. \]
Let $(\mbb{R}^k,\delta)$ denote the expanding Gaussian soliton on Euclidean space, scaled so that
\[ 0 = \lambda \delta + \mathrm{Hess}(f), 
\qquad f(y) := -\frac{\lambda}{2} |y|^2. \]
There exists $k\geq 0$ such that the product soliton $(M \times \mbb{R}^k,g_0 + \delta,\lambda,X+\nabla f)$, considered as a fixed point of the curvature-normalized Ricci flow \eqref{eq:cnrf} for metrics on $M \times \mbb{R}^k$, is strictly linearly stable.
\end{thm}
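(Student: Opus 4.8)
The plan is to reduce the quadratic form $(\opA h, h)$ to a single weighted Bochner expression in which the only indefinite term is the curvature action, and then to control that term sector by sector, using the Gaussian factor to tame the one sector the base geometry cannot control on its own. All of this is carried out in the weighted $L^2$ inner product $(\,\cdot\,,\,\cdot\,)$ adapted to the soliton: on the Euclidean factor this is integration against $d\mu = e^{f}\,dV$, the measure for which the drift connection Laplacian is self-adjoint and nonpositive.

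First I would rewrite $\opA$ with the Weitzenböck decomposition $\Delta_L h = \Delta h + 2\Ro(h) - \Rc\circ h - h\circ\Rc$, where $\Delta = -\nabla^\ast\nabla$ is the rough Laplacian and $\Ro$ is the curvature action, and then feed in the soliton equation. Writing $\half\mcl{L}_X g = \mathrm{Sym}(\nabla X)$, the soliton identity gives $\mathrm{Sym}(\nabla X) = \Rc - \lambda g$, so the Ricci terms from Weitzenböck, the $2\lambda h$ term, and the symmetric zeroth-order part of $\mcl{L}_X h$ cancel identically; the antisymmetric part of $\nabla X$ yields a pointwise skew operator and hence drops out of $\langle\,\cdot\,,h\rangle$. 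What survives is the first-order drift $\nabla_X h$, which combines with $\Delta$ under integration by parts against $d\mu$ to produce the clean identity
\[ (\opA h, h) = -\norm{\nabla h}^2 + 2\,(\Ro(h), h). \]
Thus strict stability becomes the statement that the weighted connection Laplacian dominates $2\Ro$ by a definite gap.

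Next I would split $h$, in the weighted orthogonal decomposition, into its transverse–traceless part, its conformal part $u\,g$, and a gauge part removed by the DeTurck reduction. On the transverse–traceless sector the bounded-curvature hypothesis gives $2(\Ro(h),h) \leq 2C\norm{h}^2$ with $C \sim \norm{\Rm}_\infty$, so here everything rests on the spectral gap of $\nabla^\ast\nabla$; the quadratic growth of the soliton potential makes this operator behave like an Ornstein–Uhlenbeck (harmonic-oscillator) operator with discrete spectrum bounded below, which is what forces a gap and, after excising the parallel, gauge, and scaling modes, strict negativity. The conformal sector is where the Gaussian is indispensable: for $h = u\,g$ on the $N$-dimensional product ($N = n+k$) the trace part contributes
\[ (\opA(u\,g), u\,g) = -N\norm{\nabla u}^2 + 2\int u^2\,\scal\,d\mu \]
to the form, and since the Euclidean factor is flat the scalar curvature stays bounded while the coefficient $N$ in front of the positively gapped term $\norm{\nabla u}^2$ grows with $k$. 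Choosing $k$ so large that $N\mu_0 > 2\sup\abs{\scal}$, where $\mu_0 > 0$ is the gap of the weighted scalar Laplacian above the constants, makes this sector strictly stable; this is precisely how appending Gaussian factors stabilizes the soliton.

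The main obstacle I expect is analytic rather than algebraic. On a noncompact base the operators act on a weighted space with growing weight $e^{f}$, so essential self-adjointness, the spectral theorem, and—crucially—a uniform lower bound on the \emph{essential} spectrum of $\nabla^\ast\nabla$ (that no continuous spectrum accumulates at the bottom) must be established before any gap argument is legitimate; the bounded-geometry consequences of bounded curvature are what I would exploit here. Two further difficulties are that the base field $X$ need not be a gradient, so the integration by parts and the decomposition must be taken with respect to a measure for which $\nabla_X$ is skew up to the correct divergence term, and that for non-Einstein solitons the conformal and transverse–traceless sectors are coupled through the trace-free part of $\Rc$ in $\Ro(u g) = u\,\Rc$; these cross-terms must be absorbed via Cauchy–Schwarz and the bound on $\Rc$ so as not to spoil the sector estimates. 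Granting these, the sector estimates assemble into $(\opA h, h) \leq -\epsilon\norm{h}^2$ for $k$ sufficiently large, which is the asserted strict linear stability.
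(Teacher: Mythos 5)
Your core mechanism is the right one---flatness of the Euclidean factor keeps the curvature term bounded by a constant depending only on $(M,g_0)$, while the Gaussian potential contributes a stabilizing term growing linearly in $k$ (your Ornstein--Uhlenbeck gap $\abs{\lambda}k$ and the paper's $\mrm{div}(\nabla f)=-\lambda k$ are the same quantity $\Delta f$ in two guises)---but two steps you lean on have genuine gaps. The first is the non-gradient drift. Your foundational identity $(\opA h,h)=-\norm{\nabla h}^2+2(\Ro h,h)$ requires the \emph{entire} soliton field to be a gradient: the pointwise cancellation of the Ricci terms, $2\lambda h$, and the symmetric part of $\nabla(X+\nabla f)$ is fine, but merging the surviving drift term with $-(\nabla^*\nabla h,h)$ into $-\norm{\nabla h}^2$ under a weight $e^{\phi}\,dV$ forces $\mrm{div}\bigl(e^{\phi}(X+\nabla f-\nabla\phi)\bigr)=0$, and for a general non-gradient $X$ (the theorem makes no gradient assumption) no such $\phi$ need exist. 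You flag this difficulty but ``granting it'' is not a repair; it is exactly the point where the weighted framework collapses. The paper's proof is built to avoid this: it never symmetrizes the drift at all. Its criterion (Proposition \ref{prop:stab-cond}) keeps the raw term $\frac12\int\mrm{div}(X+\nabla f)\abs{h}^2\,d\mu$ in the \emph{unweighted} $L^2$ inner product, bounds $\mrm{div}X=\scal-n\lambda$ by a constant $C_2$ using the traced soliton equation and bounded curvature, and uses $\mrm{div}(\nabla f)=-\lambda k$ to supply the $k$-growth.

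The second gap is in your sector analysis. On the conformal sector you invoke a fixed gap $\mu_0>0$ of the weighted scalar Laplacian ``above the constants'' and let the dimension coefficient $N$ do the work; but on a noncompact base with only bounded curvature and no weight (see the first gap), the bottom of the spectrum of the Laplacian is typically $0$ with no gap, so $N\mu_0$ does not grow---the only genuine source of a gap is the Gaussian weight itself, and that gap is $\abs{\lambda}k$, not $N\mu_0$. Likewise, ``excising the parallel, gauge, and scaling modes'' on the TT sector is not available: strict linear stability as defined in \eqref{eq:stab-cond} must hold on the whole tensor space, so you would have to prove those modes are absent from the weighted space (they are, since $e^{f}$ grows, but that requires an argument), not quotient them out. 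Finally, the decomposition is unnecessary even on your own terms: once one has the fiberwise bound $\int_{\mbb{R}^k}\abs{\nabla^{(y)}u}^2e^{f}\,dy\geq\abs{\lambda}k\int_{\mbb{R}^k}u^2e^{f}\,dy$, it applies componentwise to \emph{every} $h$, giving $(\opA h,h)\leq(2C-\abs{\lambda}k)\norm{h}^2$ with no TT/conformal splitting and none of the cross-term absorption you describe. By contrast, the paper's argument is entirely elementary and pointwise: $\abs{q_{(x,y)}(h)}\leq C_1\abs{h}^2$ since curvature splits and is bounded, $\abs{\mrm{div}X}\leq 2C_2$, $\mrm{div}(\nabla f)=-\lambda k$, hence $Q(h)-\frac12\int\mrm{div}(X+\nabla f)\abs{h}^2\,d\mu\leq[C_1+C_2+\frac12\lambda k]\norm{h}^2$, and one chooses $k$ large. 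No self-adjointness, essential spectrum, or spectral gaps are ever needed, which is precisely what makes the result accessible for non-gradient solitons.
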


Although the theorems of \cite{WilliamsWu2013-dynamical} apply to a somewhat wider range of spaces, for simplicity we state the following dynamical stability result in terms of algebraic solitons.

\begin{thm}[\cite{WilliamsWu2013-dynamical}*{Theorem 1.4}]\label{thm:dyn-stab}
Suppose $(M,g_0)$ is an algebraic soliton.  The product metric $g_0+\delta$ from Theorem \ref{thm:lin-stab} is dynamically stable in the context of \cite{WilliamsWu2013-dynamical}*{Theorem 1.2}.
\end{thm}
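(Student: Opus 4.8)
The statement is a direct application of the abstract dynamical stability theorem of Williams and Wu, so the plan is to verify that the product soliton $(M \times \mbb{R}^k, g_0 + \delta)$ meets the hypotheses of \cite{WilliamsWu2013-dynamical}*{Theorem 1.2}, with the strict linear stability it requires supplied by Theorem \ref{thm:lin-stab}. Recall that Theorem 1.2 is an abstract statement in the spirit of Simonett \cite{Simonett1995}: a soliton fixed point of \eqref{eq:cnrf} is dynamically stable provided (i) the linearized operator $\opA = \Delta_L + 2\lambda + \mcl{L}_X$ is realized as a sectorial operator on a suitable scale of weighted Banach spaces of symmetric $2$-tensors, (ii) the soliton potential is proper and the geometry has bounded curvature, so that the weighted spaces enjoy the embedding and maximal-regularity properties needed to run the semigroup argument, and (iii) the fixed point is strictly linearly stable, i.e. the spectrum of $\opA$ in the weighted inner product lies in a half-plane $\{\mrm{Re} \leq -\epsilon\}$. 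The task is therefore to match each hypothesis to a feature of the product soliton.

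First I would record that $(M \times \mbb{R}^k, g_0 + \delta)$ is again an expanding Ricci soliton, with soliton vector field $X + \nabla f$ and the same $\lambda$, and that it remains an \emph{algebraic} soliton: the Gaussian factor on $\mbb{R}^k$, viewed as an abelian Lie group, satisfies $\Ric = 0 = \lambda \id + D$ with $D = -\lambda \id$, which is a derivation, so the product derivation is $D \oplus (-\lambda\,\id)$ and the algebraic-soliton framework underlying Theorem 1.2 is inherited. Next I would verify the analytic hypotheses (i)--(ii). The essential point is that the combined potential $u := \half \abs{X}^2 + f$ is \emph{proper}: the Gaussian term $f(y) = -\smfrac{\lambda}{2}\abs{y}^2$ grows quadratically, so for $k$ large enough the weight $e^{-u}$ is confining. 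This properness forces the drift-modified Lichnerowicz operator to be self-adjoint on the weighted $L^2$-space and, crucially, pushes the bottom of the essential spectrum downward, so that the strict estimate can hold on the \emph{full} tensor space rather than being obstructed by continuous spectrum reaching up to $0$. Bounded curvature of $g_0$, and hence of the product, controls $\Delta_L$ together with the zeroth-order terms, so that $\opA$ is sectorial, generates an analytic semigroup on the weighted scale, and the continuous interpolation spaces demanded by Simonett's theorem are available.

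Finally, I would supply hypothesis (iii), which is exactly the conclusion of Theorem \ref{thm:lin-stab}: for the same $k$ (or the larger of the two values arising), the product soliton satisfies $(\opA h, h) \leq -\epsilon \norm{h}^2$ in the weighted inner product, so the spectrum of $\opA$ lies in $\{\mrm{Re} \leq -\epsilon\}$. With (i)--(iii) all verified, \cite{WilliamsWu2013-dynamical}*{Theorem 1.2} applies verbatim and yields dynamical stability of $g_0 + \delta$, completing the proof.

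I expect the only genuine subtlety to be the compatibility of function spaces and gauge. Theorem \ref{thm:lin-stab} proves the spectral estimate on a particular class of symmetric $2$-tensors in a weighted $L^2$-space adapted to the soliton, while Theorem 1.2 demands the gap on a specific scale of weighted spaces for the gauge-fixed tensors surviving the DeTurck reduction; one must check that these coincide, that the DeTurck-modified flow used in \cite{WilliamsWu2013-dynamical} is the one whose linearization is $\opA$, and that the proper-potential weight in the two results is the same. Once these identifications are made, the remaining verifications are routine.
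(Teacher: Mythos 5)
The paper offers no proof of this statement at all---it is imported verbatim from Williams--Wu \cite{WilliamsWu2013-dynamical}, where their Theorem 1.4 is exactly the combination of their abstract stability machinery (Theorem 1.2, a Simonett-type argument requiring sectoriality and maximal regularity on a suitable scale of spaces) with the spectral gap supplied by Theorem \ref{thm:lin-stab}---so your reconstruction is essentially the intended argument, including the correct identification of the linear-stability input and the genuinely relevant observation that the Gaussian extension of an algebraic soliton is again algebraic (since $-\lambda \id$ is a derivation of the abelian factor, the product satisfies $\Ric = \lambda \id + (D \oplus (-\lambda \id))$). The function-space and gauge compatibility you flag as the one real subtlety is precisely what Williams--Wu verify in their paper, so your outline matches the cited proof in both structure and substance.
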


Here is a brief outline of this paper.  In Section \ref{sec:proof}, we give a slight generalization of the criteria for linear stability found in \cite{JablonskiEtAl2013-linear}.  Then we describe Gaussian extensions of expanding Ricci solitons and prove Theorem \ref{thm:lin-stab}.  

In Section \ref{sec:models}, we express Ricci flow on the Gaussian extension as a coupled flow for objects on the original manifold $M$.  To do this, we interpret Gaussian extensions in two different geometric contexts:~as flat vector bundles and as warped products.  The bundle approach is related to work of Lott in which these objects serve as models for expanding Ricci solitons \cite{Lott2007}.  In both cases, the resulting flow is a modified Ricci flow coupled with a modified heat flow.

Finally, in Section \ref{sec:examples}, we provide more evidence for Conjecture \ref{conj:stab} listing several hundred examples of stable algebraic solitons in dimensions seven and eight.

\section{Gaussian extensions of expanding Ricci solitons}\label{sec:proof}

\subsection{Criteria for linear stability}\label{subsec:lin-criteria}

In this section we recall the criteria for linear stability that were derived in Section 2 of \cite{JablonskiEtAl2013-linear}.  Let $(M,g)$ be a Riemannian manifold.  The Lichnerowicz Laplacian, acting on symmetric 2-tensors, has the form
\begin{equation}\label{eq:lich1}
-\Delta_L h = \nabla^*\nabla h - 2 \Ro h + \Ric \circ h + h \circ \Ric.
\end{equation}
Here, $\nabla^*\nabla = -\tr_g \nabla^2 h = -\Delta h$ is the connection Laplacian on 2-tensors.  The second term is the action of the Riemann tensor on symmetric 2-tensors.  In local coordinates, this is
%\begin{equation}\label{eq:Ro-coords}
\[ (\Ro h)_{ij} = R_{ipqj} h^{pq}. \]
%\end{equation}
The last two terms in \eqref{eq:lich1} are an abuse of notation for the action of the Ricci tensor on symmetric 2-tensors.  In local coordinates, this is
%\begin{equation}\label{eq:Rico-coords}
\[ (\Ric \circ h + h \circ \Ric)_{ij} = R_i^k h_{kj} + R_j^k h_{ki}. \]
%\end{equation}
Now, define the quadratic forms that depend on $g$:
\begin{align}
q_x(h) &:= \langle \Ro h + \Ric \circ h,h \rangle\big|_x \label{eq:op-q} \\
Q(h) &:= (\Ro h + \Ric \circ h,h) = \int_M q_x(h) \, d\mu \label{eq:op-Q}
\end{align}
where $(\cdot,\cdot)$ is the $L^2$ inner product on symmetric $2$-tensors.

\begin{prop}\label{prop:stab-cond}
Let $(M,g,\lambda,X)$ be a Ricci soliton.  The following condition implies that $g$ is strictly linearly stable:
\[ Q(h) < \frac{1}{2} \int_M \mrm{div}(X) |h|^2 \, d\mu \]
for all $h$ where these expressions are defined.
%When $X$ has constant divergence, this simplifies to
%\[ Q(h) < \half \mrm{div}(X) \|h\|^2. \]
%When $g$ is an algebraic soliton, this simplifies to the unintegrated inequality
%\[ q(h) < \half \tr D |h|^2. \]
\end{prop}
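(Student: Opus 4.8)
The plan is to compute the quadratic form $(\opA h, h)$ directly and show that, under the stated hypothesis, it is strictly negative. Writing $\opA h = \Delta_L h + 2\lambda h + \mcl{L}_X h$ and inserting the Lichnerowicz formula \eqref{eq:lich1}, integration by parts turns the connection Laplacian into the manifestly nonpositive term $-\norm{\nabla h}^2$, since $(\nabla^*\nabla h, h) = \norm{\nabla h}^2$. This is the term that will ultimately supply the strict negativity. The remaining pieces of $(\Delta_L h, h)$ are the zeroth-order curvature terms $2(\Ro h, h) - (\Ric\circ h + h\circ\Ric, h)$, to which we must add $2\lambda\norm{h}^2$ and the Lie-derivative contribution $(\mcl{L}_X h, h)$.

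The heart of the computation is the Lie-derivative term. First I would split $\mcl{L}_X h$ into its transport part $\nabla_X h$ and the algebraic part built from $\nabla X$; since $\langle \nabla_X h, h\rangle = \half X(\abs{h}^2)$, integrating by parts converts the transport part into $-\half\int_M \mrm{div}(X)\abs{h}^2\,d\mu$, which is exactly the quantity appearing on the right-hand side of the criterion. The algebraic part pairs $h$ with the symmetric tensor $\half\mcl{L}_X g$, and here I would invoke the soliton equation \eqref{eq:ricci-soliton} to replace $\half\mcl{L}_X g$ by $\Ric - \lambda g$. This produces $\Ric$- and $\lambda$-dependent contributions that combine with the $-(\Ric\circ h + h\circ\Ric, h)$ and $2\lambda\norm{h}^2$ terms already present. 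Carrying out this bookkeeping, the $\lambda$-terms cancel and the curvature terms reorganize into a positive multiple of the form $Q(h)$ of \eqref{eq:op-Q}, leaving an identity of the schematic shape
\[
(\opA h, h) = -\norm{\nabla h}^2 + 2\left(Q(h) - \half\int_M \mrm{div}(X)\abs{h}^2\,d\mu\right),
\]
the precise constants being dictated by the sign conventions in \eqref{eq:lich1} and \eqref{eq:op-q}.

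With this identity in hand, the hypothesis $Q(h) < \half\int_M \mrm{div}(X)\abs{h}^2\,d\mu$ makes the parenthesized term negative, so that $(\opA h, h) < -\norm{\nabla h}^2 \leq 0$ for every admissible $h$. The main obstacle is then twofold. First, all of the integrations by parts must be justified in the noncompact, bounded-curvature setting; this is the reason the statement restricts to tensors $h$ ``where these expressions are defined,'' and one should check that no boundary terms arise, using decay of $h$ together with the bounded-geometry hypothesis. Second, and more seriously, one must upgrade the strict inequality, valid for each individual $h$, to the \emph{uniform} gap $(\opA h, h) \leq -\epsilon\norm{h}^2$ demanded by \eqref{eq:stab-cond}. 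I expect this to be where the real work lies: the favorable term $-\norm{\nabla h}^2$ together with the strict deficit in $Q$ should yield a spectral gap, but extracting a single $\epsilon > 0$ valid for all $h$ in the relevant tensor space — rather than merely pointwise negativity — requires a Poincaré-type inequality or a compactness argument adapted to the function space at hand.
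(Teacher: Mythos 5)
Your overall strategy---pair $\opA h$ with $h$, integrate the connection Laplacian by parts, split $\mcl{L}_X h$ into transport plus algebraic parts, and insert the soliton equation \eqref{eq:ricci-soliton}---is the right general shape, and is presumably close to what the proof the paper actually cites does (the paper's own ``proof'' is only a pointer to Proposition 2.9 of \cite{JablonskiEtAl2013-linear}). But your bookkeeping is wrong, and the error is structural, not a matter of the sign conventions you hedge on. The transport part gives $\int_M \langle \nabla_X h, h\rangle\, d\mu = -\frac{1}{2}\int_M \mrm{div}(X)\abs{h}^2\, d\mu$, with coefficient $\frac12$, not $1$; and the algebraic part of $\mcl{L}_X h$ paired with $h$ equals $2\,h^{ij}h^{k}{}_{j}\nabla_i X_k = \langle \mcl{L}_X g, h\circ h\rangle$, which by \eqref{eq:ricci-soliton} is $2\langle \Ric\circ h, h\rangle - 2\lambda \abs{h}^2$. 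This \emph{exactly cancels} the $-(\Ric\circ h + h \circ \Ric, h) + 2\lambda\norm{h}^2$ coming from $\Delta_L h + 2\lambda h$. So the Ricci terms do not ``reorganize into a positive multiple of $Q(h)$''; they disappear, and the correct identity is
\[ (\opA h, h) = -\norm{\nabla h}^2 + 2(\Ro h, h) - \frac{1}{2}\int_M \mrm{div}(X)\abs{h}^2\, d\mu, \]
with no $\Ric\circ h$ term at all. You can test this on the Gaussian soliton $(\mbb{R}^n,\delta,\lambda,\nabla f)$: there $\opA h = \Delta h + \nabla_{\nabla f}h$, so $(\opA h, h) = -\norm{\nabla h}^2 + \frac{n\lambda}{2}\norm{h}^2$, which matches the display (with $Q = 0$ and $\mrm{div}(X) = -n\lambda$), whereas your schematic identity gives $-\norm{\nabla h}^2 + n\lambda\norm{h}^2$. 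Nor can any choice of constants repair it: imposing $(\opA h,h) = -\norm{\nabla h}^2 + c\,\bigl(Q(h) - \frac12\int_M \mrm{div}(X)\abs{h}^2 d\mu\bigr)$ in the Einstein case $X=0$ forces $c\,(\Ro h,h) + c\lambda\norm{h}^2 \equiv 2(\Ro h,h)$, which is impossible when $\lambda \neq 0$.

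This is fatal to the argument as proposed, because once the identity is corrected your punchline evaporates: the hypothesis bounds $Q(h) = (\Ro h + \Ric\circ h, h)$, but what must be made negative is $2(\Ro h, h) - \frac12\int_M \mrm{div}(X)\abs{h}^2 d\mu$, and passing from one to the other requires controlling the discrepancy $2(\Ric \circ h, h) - \frac12\int_M \mrm{div}(X)\abs{h}^2 d\mu$, necessarily with the help of the Dirichlet term $-\norm{\nabla h}^2$ (a Poincar\'e-- or Weitzenb\"ock--type ingredient). That missing ingredient is exactly the content that must live in the cited proof of Proposition 2.9 of \cite{JablonskiEtAl2013-linear}, and it is absent from your outline; in particular the proposition is \emph{not} a triviality falling out of the linear algebra of the soliton equation. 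Your two flagged concerns---justifying integration by parts without boundary terms, and upgrading per-$h$ strict negativity to the uniform gap \eqref{eq:stab-cond}---are legitimate (the second is a real imprecision even in the statement, though in the paper's application in Theorem \ref{thm:lin-stab} a uniform deficit of the form $Q(h) - \frac12\int \mrm{div}(X+\nabla f)\abs{h}^2 d\mu \leq -\norm{h}^2$ is produced), but they are secondary: the main identity your proof rests on is false.
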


\begin{proof}
This follows directly from the proof of Proposition 2.9 in \cite{JablonskiEtAl2013-linear}.  The only difference is that if we do not assume that $X$ has constant divergence, then the simplification in equation (2.7) there does not carry through.
\end{proof}

\subsection{Proof of Theorem \ref{thm:lin-stab}}

Now we can prove \eqref{thm:lin-stab}.  We are given a Ricci soliton $(M^n,g_0,\lambda,X)$.  For some $k$, recall that the Gaussian soliton on $\mbb{R}^k$ is the usual Euclidean metric $\delta$ interpreted as a gradient Ricci soliton:
\[  0 = \lambda \delta + \mrm{Hess}(f), \]
where $f(y) := - \frac{\lambda}{2} |y|^2$ and $|y|^2$ is the square of the distance to the origin.  We can form the \textit{Gaussian extension} of $g$, which is the Ricci soliton 
\[(M \times \mbb{R}^k,g_0+\delta,\lambda,X+\nabla f). \]
It is easy to check that this is indeed a soliton, and it is expanding/steady/shrinking if and only if $(M,g_0,\lambda,X)$ is.

Assume that $g_0$ is expanding (so $\lambda < 0$), and consider the curvature-normalized Ricci flow for metrics $\mbf{g}$ on the product $M \times \mbb{R}^k$:
\[ \partial_t \mbf{g} = -2 \Rc(\mbf{g}) + 2 \lambda \mbf{g} + \mcl{L}_{X+\nabla f} \mbf{g}. \]
After applying the DeTurck trick, the linearization of the right side of the above equation is
\[ \opA h := \Delta_L h + 2\lambda h + \mcl{L}_{X + \nabla f} h \]
acting on symmetric 2-tensors on $M \times \mbb{R}^k$.  See \cite{GuentherIsenbergKnopf2006} for details.

For linear stability, by Proposition \ref{prop:stab-cond} we need to show that 
\[ Q(h) - \frac{1}{2} \int_{M \times \mbb{R}^k} \mrm{div}(X+\nabla f) |h|^2 \, d\mu \stackrel{?}{<} 0, \]
where all quantities are computed with respect to $g_0+\delta$.  At any point $(x,y) \in M \times \mbb{R}^k$, we estimate $q_{(x,y)}(h)$ using that both $\Ro$ and $\Ric$ split relative to the product metric, that $\delta$ is flat, that $g_0$ has bounded curvature, and the Cauchy inequality.  In local coordinates $(x^i)$ on $M$, 
\begin{align*}
|q_{(x,y)}(h)|
&= \left| \sum_{ijk\ell} R_{ijk\ell} h_{i\ell} h_{jk} 
 + \sum_{ijk} R_{ij} h_{ik} h_{jk} \right| \\
&= \frac{1}{2} \sum_{ijk\ell} |R_{ijk\ell}|[ h_{i\ell}^2 + h_{jk}^2] 
 + \frac{1}{2} \sum_{ij} |R_{ijk}|[ h_{ik}^2 + h_{jk}^2] \\
&\leq C_1 |h|^2
\end{align*}
where $C_1$ is a constant depending only on $(M,g_0)$.  It is also clear that
\[ \mrm{div}(X+\nabla f) = 
\mrm{div}(X) + \mrm{div}(\nabla f) =
\mrm{div}(X) - \lambda k, \]
and tracing \eqref{eq:ricci-soliton} gives
\[ |\mrm{div}(X)| \leq |\scal(g)| + n|\lambda| \leq 2 C_2, \]
where $C_2$ is a constant depending only on $(M,g_0)$.

Combining these estimates, we see that
\begin{align*}
Q(h) - \frac{1}{2} \int_{M \times \mbb{R}^k} \mrm{div}(X+\nabla f) |h|^2 \, d\mu 
&\leq \int_{M \times \mbb{R}^k} \left[ C_1 + C_2 + \frac{1}{2} \lambda k \right] |h|^2 \, d\mu \\
&= \left[ C_1 + C_2 + \frac{1}{2} \lambda k \right] \|h\|^2.
\end{align*}
Since $\lambda < 0$ and the constants do not depend on $k$, we are free to choose $k$ large enough so that the quantity in brackets is, say, less than $-1$.  Thus, $g_0+\delta$ is strictly linearly stable.  If $(M,g_0,\lambda,X)$ is already linearly stable, then of course we can choose $k=0.$  This completes the proof.

\section{Geometric models for stable expanding solitons}\label{sec:models}

Given an expanding Ricci soliton $(M,g_0)$ with bounded curvature, Theorem \ref{thm:lin-stab} implies that $g_0+\delta$ is stable under curvature-normalized Ricci flow on $M \times \mbb{R}^k$ for some $k \geq 0$, but what can we conclude about $g_0$ itself?  Under which flow(s) is $g_0$ stable?  We answer this question by considering two geometric contexts where the curvature-normalized Ricci flow for metrics on the product $M \times\mbb{R}^k$ can be written as a coupled flow for objects on $M$ only.

\subsection{Flat vector bundles}

As models for expanding homogeneous Ricci solitons, Lott studied a certain class of vector bundles \cite{Lott2007}.  Let $N$ be an $\mbb{R}^k$-vector bundle with flat connection, flat metrics $G$ on the fibers, and Riemannian base $(M^n,g)$.  Assume that the connection preserves fiberwise volume forms.  The inner products on the fibers can be thought of as a map $G : (M,g) \rightarrow (\SL(k,\mbb{R})/\SO(k),\theta)$, where $\theta$ is the natural metric induced by $G$:
\[ \theta_G(X,Y) = \tr(G^{-1} X G^{-1} Y). \]

There is a natural corresponding metric $\mbf{g}$ on $N$, which is locally a product, and in this class of metrics Ricci flow $\partial_t \mbf{g} = -2 \Rc[\mbf{g}]$ is the coupled flow
\begin{align*}
\partial_t g &= -2 \Rc + \frac{1}{4} \nabla G \otimes \nabla G \\
\partial_t G &=  \tau_{g,\theta} G
\end{align*}
where $\tau_{g,\theta} G$ is the harmonic map Laplacian of $G$ and $\nabla G \otimes \nabla G = G^*\theta$; see \cite{Williams2013-hrf}*{Section 2.1}.  This type of coupled flow is called harmonic-Ricci flow, or Ricci flow coupled with harmonic map flow, and has been studied by various authors, e.g., \cites{List2008,Muller2012}.

In our case, the product $(M \times \mbb{R}^k,g_0+\delta)$ is trivially a bundle of the form just described (where $G = \delta$ is constant), and we want to describe how curvature-normalized Ricci flow
\[ \partial_t \mbf{g} = -2 \Rc(\mbf{g}) + 2 \lambda \mbf{g} + \mcl{L}_{X+\nabla f} \mbf{g}. \]
can be written as a coupled flow.  We need to understand the ``new'' terms $2 \lambda \mbf{g} + \mcl{L}_{X+\nabla f} \mbf{g}$.  From this, the first extra term is
\[ 2\lambda g\mbf{g} = 2\lambda g + 2\lambda G, \]
at least locally.  The Lie derivative term is
\[ \mcl{L}_{X+\nabla f} \mbf{g} = \mcl{L}_X g + 2 \, \mrm{Hess}_G(f). \]
This means that the flow is
\begin{equation}\label{eq:rf-bundle}
\begin{aligned}
\partial_t g &= -2 \Rc + 2 \lambda g + \mcl{L}_X g + \frac{1}{4} \nabla G \otimes \nabla G \\
\partial_t G &=  \tau_{g,\theta} G + 2 \lambda G + 2 \, \mrm{Hess}_G(f)
\end{aligned}
\end{equation}

\subsection{Warped products}

Another model for certain Ricci solitons is warped products, see \cites{HeEtAl2013,Lafuente2014}.  Let $M \times N$ be a product manifold with warped product metric
\[ \mbf{g} := g + e^{2\phi} \gamma, \]
where $g$ is a metric on $M$, $\phi \in C^\infty(M)$, and $\gamma$ is a metric on $N$.  In this class of metrics, Ricci flow $\partial_t \mbf{g} = -2 \Rc[\mbf{g}]$ is the coupled flow
\begin{align*}
\partial_t g    &= -2 \Rc + 2k \, \mrm{Hess}(\phi) + 2k \, d\phi \otimes d\phi \\
\partial_t \phi &= \Delta \phi + k |d\phi|^2
\end{align*}
See \cite{Williams2013-systems}*{Section 3}.

In our case, we take warped products $(M \times \mbb{R}^k,g+e^{2\phi} \delta)$, so that $g_0 + \delta$ is such a metric (where $\phi$ vanishes).  We want to describe how curvature-normalized Ricci flow
\[ \partial_t \mbf{g} = -2 \Rc(\mbf{g}) + 2 \lambda \mbf{g} + \mcl{L}_{X+\nabla f} \mbf{g} \]
can be written as a coupled flow.  

We again need to understand the ``new'' terms $2 \lambda \mbf{g} + \mcl{L}_{X+\nabla f} \mbf{g}$.  The first extra term is obvious:
\[ 2\lambda \mbf{g} = 2\lambda g + 2\lambda e^{2\phi} \delta. \]
The Lie derivative term is
\[ \mcl{L}_{X +\nabla f} \mbf{g} 
= \mcl{L}_X g + 2 e^{2\phi} \mrm{Hess}(f) 
= \mcl{L}_X g - 2\lambda e^{2\phi} \delta. \]
Now, as in \cite{Williams2013-systems}*{Proposition 3.2}, curvature-normalized Ricci flow becomes
\begin{align*}
\partial_t g    &= -2 \Rc + 2k \, \mrm{Hess}(\phi) + 2k \, d\phi \otimes d\phi + 2\lambda g + \mcl{L}_X g \\
\partial_t \phi &= \Delta \phi + k |d\phi|^2
\end{align*}
This can be simplified.  Pulling back by diffeomorphisms generated by the vector field $-k \nabla \phi$ results in the addition of the following Lie derivatives to the above equations:
\[ \mcl{L}_{-k \nabla \phi} g = -2k \, \mrm{Hess}(\phi), \qquad
 \mcl{L}_{-k\nabla \phi} \phi = -k |d\phi|^2, \]
so the modified flow is
\begin{equation}\label{eq:rf-wp}
\begin{aligned}
\partial_t g    &= -2 \Rc + 2k \, d\phi \otimes d\phi + 2\lambda g + \mcl{L}_X g \\
\partial_t \phi &= \Delta \phi
\end{aligned}
\end{equation}

\begin{rem}
By Theorem \ref{thm:lin-stab}, we know that the metric $g_0+\delta$ is a stable fixed point of curvature-normalized Ricci flow for some $k\geq 0$.  This implies that $g_0+\delta$ is a stable fixed point the systems \eqref{eq:rf-bundle} and \eqref{eq:rf-wp}.  Thus, stability of these systems is obtained from the opposite direction of \cite{Williams2013-systems}, which derives stability of similar coupled flows given information about the individual flows.
\end{rem}

\section{Algebraic solitons}\label{sec:examples}

%\subsection{Examples of stable algebraic solitons}

We saw in Theorem \ref{thm:stab-all} that all nilsolitons of dimension $6$ or less are strictly linearly stable.  This was proven by verifying the following algebraic condition for linear stability:
\begin{equation}\label{eq:q-cond}
q(h) < \frac{1}{2} \tr D |h|^2
\end{equation}
for all $2$-tensors on the Lie algebra.  (Recall that $q$ was defined in \eqref{eq:op-q}.)  In this section we expand this list to include more examples in dimensions $7$ and $8$.  As in \cite{JablonskiEtAl2013-linear}, the condition is verified using a computer algebra system.

Kadioglu and Payne recently classified all $7$- and $8$-dimensional nilsolitons with simple pre-Einstein derivation \cite{KadiogluPayne2013}.  As in \cite{JablonskiEtAl2013-linear}, we show that these examples, and their 1-dimensional solvable Einstein extensions, are strictly linearly stable.  See Tables 1, 2, 3, 4.  We number the examples as in Tables III and IV in \cite{KadiogluPayne2013}.

\begin{table}
%\small
\begin{tabular}{|cccccc|cc|}
\hline
 \multicolumn{6}{|c|}{Nilsoliton}     & \multicolumn{2}{c|}{Solvable extension} \\    
\# &  step  &  $\lambda$ &  $\tr D$  &  max $q$ &  $\overset{?}{<} \half \tr D$ & max $\Ro$ &  $\overset{?}{<} -\lambda$ \\
\hline
 1 & 6 & -17 & 100 & 7.813 & \checkmark & 11.920 & \checkmark \\
 2 & 6 & -18.5 & 112 & 9.455 & \checkmark & 13.356 & \checkmark \\
 3 & 3 & -14.5 & 87.5 & 6.181 & \checkmark & 8.775 & \checkmark \\
 4 & 5 & -176.5 & 1012.5 & 86.000 & \checkmark & 128.966 & \checkmark \\
 5 & 4 & -13.5 & 80 & 6.735 & \checkmark & 10.560 & \checkmark \\
 6 & 5 & -9 & 53 & 4.086 & \checkmark & 6.482 & \checkmark \\
 7 & 3 & -18.5 & 112 & 10.473 & \checkmark & 13.088 & \checkmark \\
 8 & 3 & -5.5 & 33.5 & 2.836 & \checkmark & 4.069 & \checkmark \\
 9 & 4 & -7.5 & 45.5 & 4.064 & \checkmark & 6.128 & \checkmark \\
10 & 3 & -5.5 & 33.5 & 2.827 & \checkmark & 4.137 & \checkmark \\
11 & 3 & -6 & 37 & 2.585 & \checkmark & 4.792 & \checkmark \\
12 & 3 & -6.5 & 40 & 2.336 & \checkmark & 4.393 & \checkmark \\
13 & 3 & -6.5 & 40 & 2.513 & \checkmark & 4.642 & \checkmark \\
14 & 3 & -8 & 49 & 2.780 & \checkmark & 4.748 & \checkmark \\
15 & 4 & -9.5 & 56.5 & 4.808 & \checkmark & 7.678 & \checkmark \\
16 & 4 & -18.5 & 112 & 7.694 & \checkmark & 12.115 & \checkmark \\
17 & 5 & -13.5 & 80 & 7.399 & \checkmark & 10.940 & \checkmark \\
18 & 3 & -7 & 43 & 2.910 & \checkmark & 4.872 & \checkmark \\
19 & 3 & -10.5 & 62.5 & 4.707 & \checkmark & 8.502 & \checkmark \\
20 & 4 & -15.5 & 94 & 8.066 & \checkmark & 12.165 & \checkmark \\
21 & 3 & -11.5 & 70 & 3.654 & \checkmark & 6.770 & \checkmark \\
22 & 3 & -11.5 & 70 & 5.648 & \checkmark & 8.405 & \checkmark \\
23 & 4 & -16.5 & 95 & 8.912 & \checkmark & 13.739 & \checkmark \\
24 & 3 & -18.5 & 112 & 7.914 & \checkmark & 11.433 & \checkmark \\
25 & 5 & -34 & 203 & 15.663 & \checkmark & 24.535 & \checkmark \\
26 & 4 & -14.5 & 113.5 & 10.588 & \checkmark & 15.823 & \checkmark \\
27 & 3 & -18.5 & 112 & 9.029 & \checkmark & 11.746 & \checkmark \\
28 & 3 & -23.5 & 137 & 10.488 & \checkmark & 19.260 & \checkmark \\
29 & 4 & -34 & 203 & 14.334 & \checkmark & 23.417 & \checkmark \\
\hline 
\end{tabular}
\caption{Linear stability of 7-dimensional nilsolitons with simple pre-Einstein derivation, and their 1-dimensional solvable Einstein extensions.}
\label{table:solitons7}
\normalsize
\end{table}

\begin{table}
%\small
\begin{tabular}{|cccccc|cc|}
\hline
 \multicolumn{6}{|c|}{Nilsoliton}     & \multicolumn{2}{c|}{Solvable extension} \\    
\# &  step  &  $\lambda$ &  $\tr D$  &  max $q$ &  $\overset{?}{<} \half \tr D$ & max $\Ro$ &  $\overset{?}{<} -\lambda$ \\
\hline
1 & 7 & -79.5 & 541.5 & 39.522 & \checkmark & 59.029 & \checkmark \\
2 & 7 & -14.5 & 102 & 6.721 & \checkmark & 9.467 & \checkmark \\
3 & 4 & -8 & 57 & 3.423 & \checkmark & 5.375 & \checkmark \\
4 & 5 & -11.5 & 81.5 & 6.100 & \checkmark & 8.736 & \checkmark \\
5 & 6 & -18.5 & 130.5 & 9.455 & \checkmark & 13.340 & \checkmark \\
6 & 3 & -200.5 & 1404.5 & 100.132 & \checkmark & 118.767 & \checkmark \\
7 & 4 & -256.5 & 1740.5 & 141.401 & \checkmark & 182.632 & \checkmark \\
8 & 4 & -340.5 & 2244.5 & 187.360 & \checkmark & 264.682 & \checkmark \\
9 & 5 & -9 & 62 & 4.086 & \checkmark & 6.416 & \checkmark \\
10 & 3 & -17 & 117 & 7.826 & \checkmark & 12.670 & \checkmark \\
11 & 3 & -5.5 & 39.5 & 2.241 & \checkmark & 3.056 & \checkmark \\
12 & 4 & -22 & 153 & 8.077 & \checkmark & 13.350 & \checkmark \\
13 & 5 & -10.5 & 73 & 4.304 & \checkmark & 6.817 & \checkmark \\
14 & 6 & -14.5 & 99 & 8.158 & \checkmark & 11.814 & \checkmark \\
15 & 3 & -6.5 & 46 & 2.268 & \checkmark & 3.871 & \checkmark \\
16 & 3 & -18.5 & 130.5 & 7.888 & \checkmark & 12.987 & \checkmark \\
17 & 3 & -6.5 & 46.5 & 2.513 & \checkmark & 4.582 & \checkmark \\
18 & 4 & -10.5 & 69.5 & 5.745 & \checkmark & 9.044 & \checkmark \\
19 & 4 & -9 & 62 & 3.902 & \checkmark & 6.580 & \checkmark \\
20 & 4 & -10.5 & 69.5 & 5.980 & \checkmark & 8.810 & \checkmark \\
21 & 3 & -9.5 & 66 & 3.807 & \checkmark & 7.022 & \checkmark \\
22 & 4 & -9.5 & 66 & 4.808 & \checkmark & 7.586 & \checkmark \\
23 & 5 & -12 & 82 & 5.710 & \checkmark & 8.631 & \checkmark \\
24 & 6 & -17 & 117 & 7.813 & \checkmark & 11.801 & \checkmark \\
25 & 3 & -6.5 & 46 & 3.302 & \checkmark & 4.321 & \checkmark \\
26 & 3 & -7.5 & 51.5 & 3.364 & \checkmark & 5.732 & \checkmark \\
27 & 3 & -7 & 50 & 2.910 & \checkmark & 4.800 & \checkmark \\
28 & 3 & -9 & 62 & 4.479 & \checkmark & 6.821 & \checkmark \\
29 & 4 & -13.5 & 93.5 & 6.735 & \checkmark & 10.421 & \checkmark \\
30 & 4 & -16 & 109 & 8.423 & \checkmark & 12.639 & \checkmark \\
31 & 4 & -19.5 & 133 & 10.451 & \checkmark & 15.839 & \checkmark \\
32 & 3 & -8 & 57 & 3.390 & \checkmark & 4.791 & \checkmark \\
33 & 3 & -8.5 & 60.5 & 3.390 & \checkmark & 4.690 & \checkmark \\
34 & 3 & -8 & 57 & 4.386 & \checkmark & 5.075 & \checkmark \\
35 & 3 & -9.5 & 66 & 4.738 & \checkmark & 7.126 & \checkmark \\
36 & 3 & -9.5 & 66 & 4.557 & \checkmark & 6.222 & \checkmark \\
37 & 4 & -12 & 83 & 6.286 & \checkmark & 9.650 & \checkmark \\
38 & 5 & -23.5 & 160.5 & 13.433 & \checkmark & 19.596 & \checkmark \\
39 & 3 & -9 & 62 & 4.535 & \checkmark & 6.553 & \checkmark \\
40 & 4 & -13.5 & 89 & 7.070 & \checkmark & 10.899 & \checkmark \\
\hline 
\end{tabular}
\caption{Linear stability of 8-dimensional nilsolitons with simple pre-Einstein derivation, and their 1-dimensional solvable Einstein extensions.}
\label{table:solitons8-1}
\normalsize
\end{table}

\begin{table}
%\small
\begin{tabular}{|cccccc|cc|}
\hline
 \multicolumn{6}{|c|}{Nilsoliton}     & \multicolumn{2}{c|}{Solvable extension} \\    
\# &  step  &  $\lambda$ &  $\tr D$  &  max $q$ &  $\overset{?}{<} \half \tr D$ & max $\Ro$ &  $\overset{?}{<} -\lambda$ \\
\hline
41 & 4 & -13.5 & 89 & 7.527 & \checkmark & 10.458 & \checkmark \\
42 & 3 & -10.5 & 73 & 5.467 & \checkmark & 8.270 & \checkmark \\
43 & 4 & -15.5 & 109.5 & 8.066 & \checkmark & 12.022 & \checkmark \\
44 & 3 & -11.5 & 81.5 & 3.654 & \checkmark & 6.680 & \checkmark \\
45 & 3 & -11.5 & 81.5 & 4.086 & \checkmark & 7.601 & \checkmark \\
46 & 3 & -13.5 & 93.5 & 6.229 & \checkmark & 10.239 & \checkmark \\
47 & 3 & -12 & 82 & 5.602 & \checkmark & 7.707 & \checkmark \\
48 & 4 & -14.5 & 97 & 7.409 & \checkmark & 10.136 & \checkmark \\
49 & 3 & -11.5 & 81.5 & 5.650 & \checkmark & 7.987 & \checkmark \\
50 & 3 & -12 & 85 & 5.645 & \checkmark & 8.091 & \checkmark \\
51 & 3 & -14.5 & 102 & 6.181 & \checkmark & 8.613 & \checkmark \\
52 & 3 & -14.5 & 102 & 6.134 & \checkmark & 8.162 & \checkmark \\
53 & 3 & -15.5 & 109 & 7.349 & \checkmark & 10.669 & \checkmark \\
54 & 4 & -18.5 & 130.5 & 7.694 & \checkmark & 11.925 & \checkmark \\
55 & 3 & -11.5 & 81.5 & 6.123 & \checkmark & 8.214 & \checkmark \\
56 & 3 & -13.5 & 93.5 & 5.697 & \checkmark & 9.333 & \checkmark \\
57 & 3 & -13.5 & 93.5 & 6.532 & \checkmark & 10.383 & \checkmark \\
58 & 3 & -18.5 & 130.5 & 7.914 & \checkmark & 11.240 & \checkmark \\
59 & 3 & -18.5 & 130.5 & 7.808 & \checkmark & 11.778 & \checkmark \\
60 & 3 & -14.5 & 99 & 6.512 & \checkmark & 11.610 & \checkmark \\
61 & 3 & -16.5 & 111.5 & 6.469 & \checkmark & 12.251 & \checkmark \\
62 & 3 & -15.5 & 109.5 & 5.946 & \checkmark & 11.096 & \checkmark \\
63 & 3 & -14.5 & 102 & 7.694 & \checkmark & 8.672 & \checkmark \\
64 & 4 & -29.5 & 205.5 & 10.444 & \checkmark & 16.705 & \checkmark \\
65 & 4 & -19.5 & 133 & 10.588 & \checkmark & 15.556 & \checkmark \\
66 & 3 & -17 & 117 & 7.887 & \checkmark & 11.407 & \checkmark \\
67 & 5 & -146.5 & 999 & 77.790 & \checkmark & 116.680 & \checkmark \\
68 & 3 & -18.5 & 130.5 & 5.702 & \checkmark & 11.013 & \checkmark \\
69 & 3 & -19 & 134 & 5.448 & \checkmark & 11.009 & \checkmark \\
70 & 3 & -18.5 & 130.5 & 9.029 & \checkmark & 11.509 & \checkmark \\
71 & 3 & -18.5 & 130.5 & 8.980 & \checkmark & 12.209 & \checkmark \\
72 & 4 & -28.5 & 188.5 & 14.569 & \checkmark & 23.386 & \checkmark \\
73 & 3 & -18.5 & 130.5 & 10.473 & \checkmark & 12.848 & \checkmark \\
74 & 3 & -19 & 134 & 9.442 & \checkmark & 12.090 & \checkmark \\
75 & 4 & -34 & 237 & 11.817 & \checkmark & 20.758 & \checkmark \\
76 & 3 & -19.5 & 133 & 11.861 & \checkmark & 14.439 & \checkmark \\
77 & 4 & -24 & 163 & 13.468 & \checkmark & 18.668 & \checkmark \\
78 & 4 & -29.5 & 205.5 & 11.989 & \checkmark & 19.023 & \checkmark \\
79 & 3 & -23.5 & 160.5 & 10.712 & \checkmark & 19.585 & \checkmark \\
80 & 3 & -23.5 & 160.5 & 13.329 & \checkmark & 17.566 & \checkmark \\
\hline 
\end{tabular}
\caption{Linear stability of 8-dimensional nilsolitons with simple pre-Einstein derivation, and their 1-dimensional solvable Einstein extensions, continued.}
\label{table:solitons8-2}
\normalsize
\end{table}

\begin{table}
%\small
\begin{tabular}{|cccccc|cc|}
\hline
 \multicolumn{6}{|c|}{Nilsoliton}     & \multicolumn{2}{c|}{Solvable extension} \\    
\# &  step  &  $\lambda$ &  $\tr D$  &  max $q$ &  $\overset{?}{<} \half \tr D$ & max $\Ro$ &  $\overset{?}{<} -\lambda$ \\
\hline
81 & 4 & -41.5 & 281.5 & 20.256 & \checkmark & 30.823 & \checkmark \\
82 & 3 & -23.5 & 160.5 & 10.488 & \checkmark & 19.057 & \checkmark \\
83 & 4 & -28.5 & 188.5 & 14.720 & \checkmark & 22.615 & \checkmark \\
84 & 4 & -34 & 237 & 14.334 & \checkmark & 23.087 & \checkmark \\
85 & 4 & -40.5 & 278 & 20.612 & \checkmark & 31.044 & \checkmark \\
86 & 4 & -34.5 & 233 & 18.965 & \checkmark & 27.786 & \checkmark \\
87 & 4 & -40.5 & 278 & 20.790 & \checkmark & 31.394 & \checkmark \\
88 & 4 & -56 & 393 & 22.751 & \checkmark & 34.362 & \checkmark \\
89 & 5 & -61.5 & 407 & 32.577 & \checkmark & 51.080 & \checkmark \\
90 & 3 & -31.5 & 221.5 & 14.690 & \checkmark & 21.833 & \checkmark \\
91 & 3 & -39.5 & 277 & 20.431 & \checkmark & 26.032 & \checkmark \\
92 & 4 & -61.5 & 407 & 32.462 & \checkmark & 51.602 & \checkmark \\
93 & 4 & -55.5 & 373 & 23.128 & \checkmark & 39.407 & \checkmark \\
94 & 4 & -53.5 & 363 & 22.080 & \checkmark & 37.766 & \checkmark \\
95 & 6 & -140.5 & 957 & 69.987 & \checkmark & 104.704 & \checkmark \\
96 & 3 & -56 & 393 & 25.881 & \checkmark & 33.407 & \checkmark \\
97 & 3 & -146.5 & 999 & 64.925 & \checkmark & 117.734 & \checkmark \\
98 & 4 & -113.5 & 786 & 54.418 & \checkmark & 84.688 & \checkmark \\
99 & 5 & -140.5 & 957 & 74.696 & \checkmark & 110.293 & \checkmark \\
100 & 4 & -113.5 & 786 & 60.648 & \checkmark & 80.089 & \checkmark \\
101 & 3 & -113.5 & 786 & 43.757 & \checkmark & 78.935 & \checkmark \\
102 & 4 & -148.5 & 1021 & 79.862 & \checkmark & 109.395 & \checkmark \\
103 & 5 & -176.5 & 1189 & 86.000 & \checkmark & 127.249 & \checkmark \\
104 & 3 & -113.5 & 786 & 47.763 & \checkmark & 76.637 & \checkmark \\
105 & 4 & -140.5 & 957 & 71.563 & \checkmark & 104.468 & \checkmark \\
106 & 4 & -146.5 & 999 & 78.439 & \checkmark & 112.347 & \checkmark \\
107 & 4 & -176.5 & 1189 & 79.759 & \checkmark & 119.755 & \checkmark \\
108 & 3 & -140.5 & 957 & 83.414 & \checkmark & 99.975 & \checkmark \\
109 & 3 & -146.5 & 999 & 64.044 & \checkmark & 115.905 & \checkmark \\
\hline 
\end{tabular}
\caption{Linear stability of 8-dimensional nilsolitons with simple pre-Einstein derivation, and their 1-dimensional solvable Einstein extensions, continued.}
\label{table:solitons8-3}
\normalsize
\end{table}

%%% -------------------------------------------------------------------
%%% bibliography
%%%-------------------------------------------------------------------

%\bibliography{../refs}

% \bib, bibdiv, biblist are defined by the amsrefs package.
\begin{bibdiv}
\begin{biblist}

\bib{GuentherIsenbergKnopf2002}{article}{
      author={Guenther, Christine},
      author={Isenberg, James},
      author={Knopf, Dan},
       title={Stability of the {R}icci flow at {R}icci-flat metrics},
        date={2002},
        ISSN={1019-8385},
     journal={Comm. Anal. Geom.},
      volume={10},
      number={4},
       pages={741\ndash 777},
}

\bib{GuentherIsenbergKnopf2006}{article}{
      author={Guenther, Christine},
      author={Isenberg, James},
      author={Knopf, Dan},
       title={Linear stability of homogeneous {R}icci solitons},
        date={2006},
        ISSN={1073-7928},
     journal={Int. Math. Res. Not.},
       pages={Art. ID 96253, 30},
         url={http://dx.doi.org/10.1155/IMRN/2006/96253},
}

\bib{HeEtAl2013}{article}{
      author={He, Chenxu},
      author={Petersen, Peter},
      author={Wylie, William},
       title={Warped product einstein metrics on homogeneous spaces and
  homogeneous {R}icci solitons},
        date={2013},
     journal={to appear in J. Reine Angew. Math.},
      eprint={arXiv:1302.0246},
}

\bib{Jablonski2014}{article}{
      author={Jablonski, Michael},
       title={Homogeneous {R}icci solitons are algebraic},
        date={2013},
     journal={to appear in Geom. Topol.},
      eprint={arXiv:1309.2515},
}

\bib{JablonskiEtAl2013-linear}{article}{
      author={Jablonski, Michael},
      author={Petersen, Peter},
      author={Williams, Michael~Bradford},
       title={Linear stability of algebraic {R}icci solitons},
        date={2013},
     journal={to appear in J. Reine Angew. Math.},
      eprint={arXiv:1309.6017},
}

\bib{KadiogluPayne2013}{article}{
      author={Kadioglu, H\"ulya},
      author={Payne, Tracy~L.},
       title={Computational methods for nilsoliton metric lie algebras {I}},
        date={2013},
        ISSN={0747-7171},
     journal={Journal of Symbolic Computation},
      volume={50},
      number={0},
       pages={350 \ndash  373},
  url={http://www.sciencedirect.com/science/article/pii/S0747717112001381},
}

\bib{LafuenteLauret2013}{article}{
      author={Lafuente, Ramiro},
      author={Lauret, Jorge},
       title={Structure of homogeneous {R}icci solitons and the {A}lekseevskii
  conjecture},
        date={2013},
     journal={to appear in J. Diff. Geom},
      eprint={arXiv:1212.6511},
}

\bib{Lafuente2014}{article}{
      author={Lafuente, Ramiro~A.},
       title={On homogeneous warped product einstein metrics},
        date={2014},
      eprint={arXiv:1403.4901},
}

\bib{List2008}{article}{
      author={List, Bernhard},
       title={Evolution of an extended {R}icci flow system},
        date={2008},
        ISSN={1019-8385},
     journal={Comm. Anal. Geom.},
      volume={16},
      number={5},
       pages={1007\ndash 1048},
}

\bib{Lott2007}{article}{
      author={Lott, John},
       title={On the long-time behavior of type-{III} {R}icci flow solutions},
        date={2007},
        ISSN={0025-5831},
     journal={Math. Ann.},
      volume={339},
      number={3},
       pages={627\ndash 666},
         url={http://dx.doi.org/10.1007/s00208-007-0127-x},
}

\bib{Lott2010}{article}{
      author={Lott, John},
       title={Dimensional reduction and the long-time behavior of {R}icci
  flow},
        date={2010},
        ISSN={0010-2571},
     journal={Comment. Math. Helv.},
      volume={85},
      number={3},
       pages={485\ndash 534},
         url={http://dx.doi.org.ezproxy.lib.utexas.edu/10.4171/CMH/203},
}

\bib{Muller2012}{article}{
      author={M\"{u}ller, Reto},
       title={Ricci flow coupled with harmonic map flow},
        date={2012},
     journal={Ann. Sci. Ec. Norm. Sup. (4)},
      volume={45},
      number={1},
       pages={101\ndash 142},
}

\bib{Simonett1995}{article}{
      author={Simonett, Gieri},
       title={Center manifolds for quasilinear reaction-diffusion systems},
        date={1995},
        ISSN={0893-4983},
     journal={Differential Integral Equations},
      volume={8},
      number={4},
       pages={753\ndash 796},
}

\bib{Williams2013-hrf}{article}{
      author={Williams, Michael~Bradford},
       title={Results on coupled {R}icci and harmonic map flows},
     journal={Adv. Geom., to appear},
      eprint={arXiv:1012.0291},
}

\bib{Williams2013-systems}{article}{
      author={Williams, Michael~Bradford},
       title={Stability of solutions of certain extended {R}icci flow systems},
        date={2013},
     journal={Preprint},
      eprint={arXiv:1301.3945},
}

\bib{WilliamsWu2013-dynamical}{article}{
      author={Williams, Michael~Bradford},
      author={Wu, Haotian},
       title={Dynamical stability of algebraic {R}icci solitons},
        date={2013},
     journal={to appear in J. Reine Angew. Math.},
      eprint={arXiv:1309.5539},
}

\end{biblist}
\end{bibdiv}

\end{document}